\documentclass[10pt]{amsart}
\usepackage{amsfonts,amssymb,mathtools,amsmath, array, amsthm}
\usepackage{geometry}
\usepackage[hidelinks]{hyperref}
\usepackage{tikz}
\usepackage{textcomp, color}
\usepackage[capitalise]{cleveref}
\usepackage{orcidlink}

\theoremstyle{plain}

\newtheorem*{theoA}{Theorem A}
\newtheorem*{theoB}{Theorem B}

\newtheorem{theo}{Theorem}[section]
\newtheorem{lem}[theo]{Lemma}

\newtheorem{corollary}[theo]{ Corollary}
\newtheorem{example}[theo]{ Example}

\newtheorem{definition}[theo]{Definition}

\begin{document}
	\baselineskip13.3pt
	\title[Sylow Theory and the Nilpotency Class of Left Nilpotent Skew Braces]{Sylow Theory and the Nilpotency Class \\ of Left Nilpotent Skew Braces}
	
	\author{G\"ulİn Ercan}
	\address{Department of Mathematics, Middle East Technical University, Ankara, Turkey}
	\email{ercan@metu.edu.tr}
	
	\author{\c{S}\"ukran G\"ul}
	\address{Department of Mathematics, Middle East Technical University, Ankara, Turkey}
	\email{gsukran@metu.edu.tr}
	
	\author{\.{I}smaİl \c{S}.\ G\"ulo\u{g}lu}
	\address{Department of Mathematics, Do\u{g}u\c{s} University, \.{I}stanbul, Turkey}
	\email{iguloglu@dogus.edu.tr}
	
	\author{M.\ YASİR KIZMAZ}
	\address{Department of Mathematics, Bilkent University, Ankara, Turkey}
	\email{yasirkizmaz@bilkent.edu.tr}

	\begin{abstract}
		Let $X$ be a finite left nilpotent skew brace and  let $\pi$ be a set of primes. We show that every Hall $\pi$-subgroup of the multiplicative group $(X,\cdot)$ is a Hall $\pi$-subbrace of $X$.  This extends \cite[Theorem 11]{CDDFT} by removing the solvability assumption. As an application, we obtain an upper bound for the left nilpotency class of $X$ in terms of the left nilpotency classes of its Sylow $p$-subbraces for every prime $p$ dividing the order of $X$.
		We also show that every $p$-subbrace of $X$ is contained in some Sylow $p$-subbrace. 
	\end{abstract}
	
	\subjclass[2020]{16T25; 20D20; 20N99}
	
	\keywords{skew braces, left nilpotent skew braces, Sylow $p$-subbraces, left nilpotency class}
	
	\maketitle

	\section{Introduction}
	
	Motivated by earlier work of Rump \cite{rum} and of Cedó, Jespers, and Okniński \cite{CJO}, Guarnieri and Vendramin \cite{GV} introduced skew braces as a generalization of classical braces. One of the principal motivations for the development of brace theory is its close connection with set theoretic solutions of the Yang-Baxter equation. Since their introduction, skew braces have attracted considerable attention and have found applications in several areas of mathematics.
	
	A \textit{skew (left) brace} is a triple $(X,+, \cdot)$ where $(X,+)$ and $(X,\cdot)$ are groups such that the compatibility condition
	\begin{align}
	\label{compatibility}
	x\cdot (y+z)=x\cdot y-x+x\cdot z
	\end{align}
	holds for all $x,y,z \in X$. Here, the groups $(X,+)$ and $(X,\cdot)$ are respectively called the  \textit{additive group} and the \textit{multiplicative group} of the skew brace. In short, we will simply denote the skew brace $(X,+,\cdot)$ by $X$. A nonempty subset $S$ of a skew brace $X$ is said to be a \textit{subbrace} if both $(S,+)$ and $(S, \cdot)$ are subgroups of $(X,+)$ and $(X,\cdot)$, respectively.
	
	\textit{Throughout the paper, all skew braces are assumed to be finite}, $|X|$ denotes the order of the skew brace $X$ and $\pi(X)$ denotes the set of primes dividing $|X|$. Given a skew brace $X$ and $p\in \pi(X)$, a subbrace $Y$  of $X$ is called a \textit{$p$-subbrace} if $|Y|=p^k$, and a \textit{Sylow $p$-subbrace} if $|Y|=|X|_p$ where $|X|_p$ denotes the $p$-part of $|X|.$
	If $\pi$ denotes a set of primes, then a subbrace $Y$ of $X$ is called a \textit{$\pi$-subbrace} if $|Y|$ is a $\pi$-number, and a \textit{Hall $\pi$-subbrace} if its order is the largest $\pi$-number dividing $|X|$.

	One of the fundamental problems in the theory of finite skew braces is the existence of subbraces of a given order. Note that classical existence results such as Cauchy's theorem and Sylow's theorem for finite groups do not extend automatically to the brace setting. In a recent study, the question of whether Cauchy's theorem holds for skew braces was addressed by Damele and P\'erez-Calabuig ~\cite{DP}, yielding a positive result for certain skew braces. It was also shown by Caranti, Del Corso, Di Matteo, Ferrara, and Trombetti in \cite{CDDFT} that the first Sylow theorem and Hall’s theorem hold for certain families of skew braces.
	In the direction of classical existence problems, the most recent and notable result was obtained by Truman ~\cite{tru}. He proved an unconditional analogue of the first Sylow theorem for skew braces and, consequently, deduced an analogue of Cauchy's theorem. Additionally, he established an analogue of the existence assertion of Hall's theorem for skew braces with solvable additive and multiplicative groups.
	
	One of the main results in \cite{CDDFT} (see Theorem 11) is the following:
	
	\textit {
		Let $\pi$ be a set of primes, and let $X$ be a solvable skew brace which is left nilpotent. Then the Hall $\pi$-subbraces of $X$ coincide with the Hall $\pi$-subgroups of $(X, \cdot)$. In particular, every Sylow $p$-subgroup of $(X, \cdot)$ is a Sylow $p$-subbrace of $X$.
	}

	In the present paper, we extend this result to all left nilpotent skew braces by removing the solvability condition.
	We also prove an analogue of the Sylow containment theorem (every $p$-subbrace is contained in a Sylow $p$-subbrace) for left nilpotent skew braces. 

	The main results of this paper are as follows:

	\begin{theoA}
  Let $X$ be a left nilpotent skew brace and let $\pi$ be a set of primes. Then the following holds:
		\begin{enumerate}
			\item[(a)] Whenever $H$ is a Hall $\pi$-subgroup of $(X, \cdot)$, then $H$ is a Hall $\pi$-subbrace of $X$.
			\item[(b)]  Every Sylow $p$-subgroup of $(X, \cdot)$ is a Sylow $p$-subbrace of $X$.
			\item[(c)]  Every $p$-subbrace of $X$ is contained in a Sylow $p$-subbrace of $X$.
		\end{enumerate}
	\end{theoA}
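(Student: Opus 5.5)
The plan is to reduce everything to part (a). Part (b) is the case $\pi=\{p\}$ of (a). Part (c) then follows from (b): a $p$-subbrace $P$ is a $p$-subgroup of $(X,\cdot)$, so by the group-theoretic Sylow theorem it lies in some Sylow $p$-subgroup $S$ of $(X,\cdot)$, and $S$ is a Sylow $p$-subbrace by (b).

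For (a) I would use the $\lambda$-map $\lambda_x(y)=-x+x\cdot y$, which gives a homomorphism $(X,\cdot)\to \mathrm{Aut}(X,+)$. The identity $x+y=x\cdot\lambda_x^{-1}(y)$ shows that a subgroup $H$ of $(X,\cdot)$ is a subbrace as soon as $\lambda_h(H)\subseteq H$ for all $h\in H$. Indeed, if $\lambda_h(H)=H$ for every $h\in H$, then $H$ is closed under $+$, and finiteness makes $(H,+)$ a subgroup. So the target is the $\lambda_H$-invariance of a Hall $\pi$-subgroup $H$ of $(X,\cdot)$. Since $|H|$ is the $\pi$-part of $|X|$, $H$ is then automatically a Hall $\pi$-subbrace.

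I would prove this by induction on $|X|$, using the left series $X^{k+1}=X*X^k$ with $x*y=-x+xy-y$. Take a minimal nonzero ideal $I$ of $X$. Since $X*I$ is again an ideal contained in $I$ and $X$ is left nilpotent, minimality forces $X*I=0$. Hence $\lambda_x$ fixes $I$ pointwise, and $x+i=x\cdot i$ for all $x\in X$, $i\in I$; in particular the additive and multiplicative cosets of $I$ coincide. The quotient $X/I$ is left nilpotent and $HI/I$ is a Hall $\pi$-subgroup of its multiplicative group. By induction $HI/I$ is a subbrace of $X/I$, so $HI$ is a subbrace of $X$. Subbraces of left nilpotent braces are left nilpotent. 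Thus if $HI\neq X$, induction applied to $HI$ (in which $H$ is still a Hall $\pi$-subgroup) finishes. If $I$ is a $\pi$-group then $I\le H$ by normality of $I$ in $(X,\cdot)$, and $H/I$ is a subbrace of $X/I$, so $H$ is a subbrace.

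The main obstacle is the remaining case, where $X=HI$ and $I\cap H$ is a proper Hall subgroup of $I$. By minimality I expect $I$ to be a $\pi'$-group (for instance elementary abelian, or at least characteristically simple), so that $H$ is a complement to $I$ in $(X,\cdot)$. For $h,k\in H$ we know $\lambda_h(k)\in kI$, and we must show $\lambda_h(k)\in H$. I would try to write $\lambda_h(k)=k\cdot\delta_h(k)$ with $\delta_h(k)\in I$, using that $\lambda$ is trivial on $I$ and that $x+i=xi$. I would then check that $h\mapsto(k\mapsto\delta_h(k))$ defines a $1$-cocycle from a $\pi$-group into a $\pi'$-module built from $I$. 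Coprimality (Schur--Zassenhaus, whose conjugacy part needs no solvability because one of $H$, $I$ is then solvable only when $I$ is abelian, so I would aim to show $I$ is abelian via $X*I=0$) would give a conjugate complement $H^{i}$ that is $\lambda$-invariant. Finally I would show that conjugation by $i\in I$ preserves $\lambda$-invariance, since $\lambda_i=\mathrm{id}$, which transfers the conclusion back to $H$. This cocycle/complement step is where I expect most of the work, and it is exactly where the solvability hypothesis of \cite[Theorem 11]{CDDFT} must be replaced by the coprime-action argument.
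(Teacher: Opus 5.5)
You have a genuine gap: the case $X=HI$ with $I\not\le H$, which carries the whole of (a), is not proved. Several things are correct. Parts (b) and (c) follow from (a) as you say. The criterion ``$\lambda_h(H)\subseteq H$ for all $h\in H$'' does characterize when a subgroup $H$ of $(X,\cdot)$ is a subbrace. The passages to $X/I$ and to $HI$ are valid, and so is $I\le H$ when $I$ is a normal $\pi$-subgroup. For the hard case, however, you offer only a plan, and several of its ingredients fail.

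\begin{itemize}
\item Minimality together with $X*I=0$ makes $I$ neither a $\pi'$-group nor abelian. The trivial skew brace on $A_5$ is left nilpotent and its only nonzero ideal is $I=X$, with $X*I=0$. For $\pi=\{2\}$ this $I$ is nonabelian of mixed order, so you never reach a coprime-complement situation.
\item Even when $I$ is an abelian $\pi'$-group, Schur--Zassenhaus conjugacy only transports $\lambda$-invariance from a complement already known to be a subbrace. Producing one such complement is exactly the existence statement you want. Your map $h\mapsto\delta_h$ is not shown to be a cocycle into a module whose vanishing $H^1$ would yield a single conjugating element of $I$.
\item $X*I=0$ says that every $\lambda_x$ fixes $I$ pointwise. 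It does not give $\lambda_i=\mathrm{id}$ for $i\in I$; that is the different condition $I*X=0$. So your transfer step is unjustified.
\item ``$X*I$ is again an ideal'' is not automatic for skew braces. $X*I$ is $\lambda$-invariant, but it need not be normal in $(X,+)$.
\end{itemize}

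The missing idea is to choose the ideal \emph{inside} $H$ from the start, so the bad case never occurs. Write $G=(X,+)$. By Lemma~\ref{A-nilpotent}, left nilpotency makes $\lambda(X,\cdot)$ nilpotent, so $\lambda(P)\unlhd\lambda(X,\cdot)$ for $P\in\operatorname{Syl}_p(H)$. By Lemma~\ref{p-comutator}, $P*X=[P,G]$ is a $p$-group, and by Lemma~\ref{lem:ideal} it is an ideal. Being a normal $p$-subgroup of $(X,\cdot)$, it lies in $P\le H$. The paper then has two cases. Either some $[P,G]\neq 0$, and it quotients by it and inducts; this is exactly your ``$I$ is a $\pi$-group'' case. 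Or all $[P,G]$ vanish, so $H\le\ker\lambda$ and $H$ is trivially a subbrace.

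In your own framework the same facts even remove the induction. Set $N=H*X=\prod_{p\in\pi(H)}[P_p,G]$, where $P_p\in\operatorname{Syl}_p(H)$. It is a sum of ideals that are normal $p$-subgroups, so it is an ideal and a $\pi$-group, hence $N\le H$. Then for $h,k\in H$ you get $\lambda_h(k)=(h*k)+k\in N+k=Nk\subseteq H$, which is the $\lambda_H$-invariance you were after.
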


	\begin{theoB}
		Let $X$ be a left nilpotent skew brace. Then
		\[
		\ell(X)\leq \max
		\Bigl\{
		\ell(P)\mid P\in \mathrm{Syl}_{p_i}(X),\; p_i\in \pi(X)
		\Bigr\}
		+1,
		\]
		where $\operatorname{Syl}_{p_i}(X)$  denotes the set of all Sylow $p_i$-subbraces of $X$.
	\end{theoB}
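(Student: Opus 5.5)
The plan is to compare the left series $X^{1}=X$, $X^{n+1}=X*X^{n}$ (with $x*y=-x+x\cdot y-y$) with the left series of the Sylow subbraces. By Theorem A(b), for each $p_i\in\pi(X)$ we may fix a Sylow $p_i$-subgroup $P_i$ of $(X,\cdot)$, and it is automatically a Sylow $p_i$-subbrace. Put $m=\max\{\ell(P)\mid P\in \mathrm{Syl}_{p_i}(X),\ p_i\in\pi(X)\}$. Since all Sylow $p_i$-subgroups of $(X,\cdot)$ are conjugate, I first check that conjugation in $(X,\cdot)$ induces brace isomorphisms between them. This is needed because $\ell(P_i)$ should not depend on the choice of $P_i$. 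If that check fails, nothing is lost: the maximum in the statement already runs over all Sylow subbraces.

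\textbf{Step 1 (cross terms vanish).} The key lemma I would aim for is the following. If $a$ has $p'$-order in $(X,\cdot)$ and $y$ lies in a $p$-subbrace, then $a*y$ lies in a term of the left series that is ``one step deeper'' on the $p$-part; ideally $a*y=0$ outright. To prove it, I would use left nilpotency: the $\lambda$-action of $(X,\cdot)$ on $(X,+)$ stabilises the chain of left ideals $X\supseteq X^2\supseteq\cdots\supseteq X^{k}=0$ and acts trivially on each factor $X^{j}/X^{j+1}$. An automorphism of order coprime to $p$ that stabilises such a chain of $p$-sections acts trivially on them. By the coprime-action argument (a $p'$-automorphism centralising a normal series of a $p$-group centralises the group), this should force $\lambda_a$ to fix $P_p\cap X^{j}$ modulo $X^{j+1}$ in a way that gives $a*y\in X^{j+1}$ with no contribution beyond what $P_p$ itself produces. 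I expect this step to be the main obstacle. The difficulty is that $(X,+)$ need not be nilpotent, so $X^{j}/X^{j+1}$ need not split into Sylow parts, and some care is needed to make the coprime argument work inside $(X,+)$ rather than inside $(X,\cdot)$. I would first try to pass to the Hall $p'$-subbrace $H$ given by Theorem A(a), using $X=P_p\cdot H$ and $|X|=|P_p||H|$ as a factorisation.

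\textbf{Step 2 (induction on $n$).} Using $x*(y+z)=x*y+y+x*z-y$ and $(x\cdot x')*y=x*(x'*y)+x'*y+x*y$, every element of $X^{n+1}$ for $n\ge 2$ is an additive combination of conjugates of elements $x*w$. Here, writing $x=a_1\cdots a_r$ with $a_i\in P_i$, each $w$ comes from earlier generators. I then prove by induction that for $n\geq 1$ the term $X^{n+1}$ is contained in the additive subgroup generated by $(X,+)$-conjugates of the sets $P_i^{\,n}$ (the $n$-th left-series terms of the subbraces $P_i$). The base case $n=1$ is trivial, which is exactly where the ``$+1$'' is lost: $X^2=X*X$ contains mixed products and we can only say $X^{2}\subseteq X=\langle P_i\rangle_+$. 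For the inductive step, Step 1 kills, or pushes deeper, the products $a*w$ with $a\in P_q$ and $w$ built from $P_p^{\,n}$ for $q\neq p$. The products with $a\in P_p$ land in $P_p*P_p^{\,n}=P_p^{\,n+1}$. Left ideals are $\lambda$-invariant, and I also need $P_p^{\,n}$ to be invariant under the $\lambda$-action of $P_q$. The latter I would extract from Step 1.

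\textbf{Conclusion.} Taking $n=m+1$ gives $X^{m+2}\subseteq\langle P_i^{\,m+1}\rangle_+=0$, hence $\ell(X)\le m+1$. The proof must use the paper's convention for $\ell$ (the least $n$ with $X^{n+1}=0$), and I will adjust the indices if the convention differs.
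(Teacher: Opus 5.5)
There is a genuine gap, and it is in Step 1, which you flagged yourself. The vanishing you aim for is false for $y$ in a Sylow $p$-subbrace. Take Example~\ref{ex:best possible bound} with $n$ odd, where the additive group $D_{2n}$ is written multiplicatively. There $B=\langle b\rangle$ is the Sylow $2$-subbrace, and $a^i$ has odd order in $(X,\circ)$. Yet $\lambda_{a^i}(b)=a^{-i}ba^{i}=ba^{2i}$, so
\[
a^i*b=\lambda_{a^i}(b)\,b^{-1}=a^{-2i}\neq 1 \qquad \text{for } i\not\equiv 0 \pmod n .
\]
This cross term is not even a $2$-element, so $P_2=B$ does not control it in any sense. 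The ``one step deeper'' version only says $a^i*b\in X^2$, which holds by definition. The coprime-action mechanism cannot be run either, because $\lambda_{a^i}$ does not stabilise $B$ (and the factors $X^j/X^{j+1}$ need not be $p$-groups). Consequently Step~2 breaks at the passage from $n=1$ to $n=2$. Your base case $X^2\subseteq\langle P_i\rangle_+=X$ is vacuous. The $\lambda(P_q)$-invariance you require of $P_p^{\,1}=P_p$ fails by the same example. And from $X^2\subseteq X$ alone nothing forces $X^3$ into $\langle {}^gP_i^{\,2}\rangle_+$.

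What is missing is the structure of $X^2$, which the paper obtains from Lemmas~\ref{A-nilpotent}, \ref{p-comutator} and~\ref{lem:ideal}. Write $A=(X,\cdot)$ and $G=(X,+)$. Left nilpotency makes $\lambda(A)$ nilpotent, so $\lambda(A)=\prod_p\lambda(P_p)$ and $X^2=[A,G]=\sum_p I_p$ with $I_p=[P_p,G]$. Each $I_p$ is a $p$-group by Lemma~\ref{p-comutator}, a nontrivial theorem. Being a $\lambda$-invariant normal subgroup of $G$, it is an ideal, and hence lies in every Sylow $p$-subgroup of $A$. The correct cross-term lemma is then $[P_q,I_p]\subseteq I_p\cap I_q=0$ for $q\neq p$: vanishing holds on $I_p$, not on $P_p$, and needs no coprime action.

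With this input your direct induction can be completed. For $n\ge 1$, $X^{n+1}$ is a subgroup of the nilpotent group $\bigoplus_p I_p$, so $X^{n+1}=\bigoplus_p (X^{n+1}\cap I_p)$. For any $x$, write $\lambda_x=\prod_p\lambda_{x_p}$ with $x_p\in P_p$; then $[x,\sum_p y_p]=\sum_p[x_p,y_p]$. Starting from the base $X^2\cap I_p=I_p\subseteq P_p$, this gives $X^{n+1}\cap I_p\subseteq P_p^{\,n}$ by induction, hence $X^{m+2}=0$, and no $(X,+)$-conjugates are needed. That would be a legitimate alternative to the paper's route. The paper instead inducts on $|\pi(A/\ker\lambda)|$, handles $|\pi|=1$ via $X^2=[P,G]\subseteq P$, and embeds $X$ into $\prod_i X/I_i$ using $\bigcap_i I_i=0$. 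Either way, the indispensable input is that $[P_p,G]$ is a $p$-group, and your proposal neither states nor replaces it.
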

	
It is worth noting that Example \ref{ex:best possible bound} demonstrates that the bound in Theorem B is best possible.


	
	\section{Preliminaries}
	\label{pre}
	
	In this section, we review the basic definitions and notation regarding skew braces that will be used in the subsequent sections.

  The compatibility condition is equivalent to requiring that the multiplicative group $(X,\cdot)$ acts on the additive group $(X,+)$ by automorphisms via the \textit{lambda map}
   \[
   \lambda \colon (X,\cdot)\to\operatorname{Aut}(X,+),\qquad x \mapsto \lambda_x
   \]
    where $\lambda_x(y)=-x+x\cdot y $ for all $x, y\in X$.	 
	
    A subbrace $I$ is an \textit{ideal} of $X$ if $(I,+) \trianglelefteq (X,+)$, \ $(I,\cdot) \trianglelefteq (X,\cdot)$ and $I$ is invariant under the maps $\lambda_x$ for every $x\in X$.
     
	If $I$ is an ideal of $X$, then the set of cosets $X/I$ inherits a natural skew brace structure given by $(x+I)+(y+I) = (x+y)+I$ and $(x+I) \cdot (y+I) = (x\cdot y)+I$ for all $x, y\in X$. Furthermore, we note that additive and multiplicative cosets coincide, that is, $xI=x+I$ for all $x \in X$. 
	
	A skew brace $X$ is said to be \emph{trivial} if $x \cdot y = x+y$ for all $x, y \in X$. Notice that $\ker\lambda$ is a trivial skew brace, and hence every subgroup of $(\ker\lambda, \cdot)$ is a subbrace of $X$.
	
	We next define the \textit{star operation} $*$ on $X$ as
	\[
	x*y=-x+x\cdot y-y=\lambda_x(y)-y
	\]
	for all $x,y\in X$.

	\begin{definition}[{\cite[Definition 12.2.1]{cedoven}}]
		Let $X$ be a skew brace. Define $
		X^{1}=X,
		$
		and for $n\geq 1$,
		\[
		X^{n+1}
		=
		X*X^{n}
		=
		\langle a*x \mid a\in X,\ x\in X^{n}\rangle_{+}.
		\]
		
		The descending chain
		\[
		X^{1}\supseteq X^{2}\supseteq X^{3}\supseteq \cdots \supseteq X^{n}\supseteq \cdots
		\]
		is called the \emph{left series} of $X$.
	\end{definition}

	\begin{definition}
		A skew brace $X$ is said to be \emph{left nilpotent} if
		$X^{m}=0$
		for some positive integer $m$. In this case, the \emph{left nilpotency class} of $X$, denoted by $\ell(X)$, is the smallest positive integer $n$ such that
		\[
		X^{n+1}=0.
		\]
	\end{definition}

	Let $X$ be a skew brace. Set $G=(X,+)$ and $A=(X,\cdot)$. Then $A$ acts on $G$ by automorphisms via the lambda map. Let us denote the action of $A$ on $G$ by $a\circ b=\lambda_a(b)$ for all $a,b\in X$. We are going to use $[a,b]$ to denote $a\circ b-b$.
	
	Then we have $$a*b=a\circ b-b=[a,b]=[\lambda_a,b]$$ with this notation. Let $\varnothing \neq U \subseteq X$.
	Then
	\[
	U*X=[U,G]=[\lambda(U),G]=\langle [u,g] : u \in U, g \in G \rangle_{+}.
	\]  $\lambda(U)=\{\lambda_a : a \in U\}$. In particular,
	we have $X*X=[A,G]=[\lambda(A),G]=\langle [a,b] : a,b \in X \rangle_{+}$. Then one can observe that $$X^{n+1}=[A,\ldots,A,G]_n=[\lambda(A),\ldots,\lambda(A),G]_n$$ for all $n\geq 1$, where $[A,\ldots,A,G]_n:=[\underbrace{A,A,\ldots,A}_{n},G]$.

	\begin{lem}\label{lem:invariant}
		Let $A$ be a group acting by automorphisms on the group $G$. If $B\unlhd A$, then $[B,G]$ is an $A$-invariant normal subgroup of $G$.
	\end{lem}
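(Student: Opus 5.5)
The plan is to work directly from the definition $[B,G]=\langle [b,g] : b\in B,\ g\in G\rangle$, where $[b,g]=b\circ g-g$ (written additively in $G$, which need not be abelian). We check separately that the generating set behaves well under conjugation in $G$ and under the action of $A$. Each property then passes to the subgroup it generates: conjugation by a fixed element of $G$ and the action of a fixed element of $A$ are automorphisms of $G$, so a set invariant under them generates a subgroup invariant under them.

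\emph{$A$-invariance.} Take $a\in A$, $b\in B$ and $g\in G$. Since $a$ acts by an automorphism of $G$, we have
\[
a\circ [b,g]=a\circ(b\circ g)-a\circ g=(aba^{-1})\circ(a\circ g)-a\circ g=[aba^{-1},a\circ g].
\]
Here $aba^{-1}\in B$ because $B\unlhd A$, and $a\circ g\in G$. So $a$ sends each generator to a generator, and therefore $a\circ[B,G]\subseteq[B,G]$.

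\emph{Normality in $G$.} It suffices to show that for $h\in G$ the conjugate $h+[b,g]-h$ lies in $[B,G]$. We use the additive analogue of the commutator identity $[b,g+h]=[b,h]^{g}\cdots$. Expanding,
\[
[b,g+h]=b\circ g+b\circ h-h-g,
\]
and rewriting this gives
\[
[b,g+h]=\bigl(b\circ g-g\bigr)+\bigl(g+(b\circ h-h)-g\bigr)=[b,g]+\bigl(g+[b,h]-g\bigr).
\]
Thus $g+[b,h]-g=-[b,g]+[b,g+h]\in[B,G]$ for all $g,h\in G$ and $b\in B$. Every generator is therefore conjugated into $[B,G]$ by every element of $G$, which gives $[B,G]\unlhd G$.

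The only delicate point is keeping the order of the terms right in the noncommutative group $G$ when expanding $[b,g+h]$. Beyond that the argument is routine, and I do not expect any real obstacle.
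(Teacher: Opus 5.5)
Your proof is correct and is essentially the paper's argument. The paper embeds everything in $\Gamma=G\rtimes A$, where the action becomes conjugation, so $A$-invariance is the identity ${}^{a}[b,g]=[{}^{a}b,{}^{a}g]$ (your $a\circ[b,g]=[aba^{-1},a\circ g]$), and normality in $G$ is simply quoted as a standard fact; you do the same computations inside $G$ and verify by hand the expansion $[b,g+h]=[b,g]+\bigl(g+[b,h]-g\bigr)$ that underlies that standard fact.
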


	\begin{proof}
		Let $\Gamma=G\rtimes A$. Identify $G$ with the normal subgroup $G\times \{1\}$ of $\Gamma$, and $A$ with the subgroup $\{0\}\times A$ of $\Gamma$. The action of $A$ on $G$ is given by conjugation in $\Gamma$, and so $a\circ [b,g]={}^{a}[b,g]=[{}^{a}b,{}^{a}g]\in [B,G]$ for all $a\in A$, $b\in B$ and $g\in G$. Then $[B,G]=[B\times \{1\},\{0\}\times G]$ is normalized by $A$. On the other hand, $[B,G]$ is always a normal subgroup of $G$, and so $[B,G]$ is an $A$-invariant normal subgroup of $G$.
	\end{proof}

	\begin{lem}[{\cite[Lemma 12.2.5]{cedoven}}]\label{lem:direct-sum}
		Let $A$ be a skew brace such that the additive group is a direct sum of ideals
		$I_1$ and $I_2$, that is,
		\[
		A=I_1+I_2
		\qquad\text{and}\qquad
		I_1\cap I_2=\{0\}.
		\]
		Then the map
		\[
		f:A\longrightarrow I_1\times I_2
		\]
		defined by
		\[
		f(a_1+a_2)=(a_1,a_2),
		\]
		for all $a_1\in I_1$ and $a_2\in I_2$, is an isomorphism of skew braces.
	\end{lem}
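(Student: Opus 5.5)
The statement to prove is \cref{lem:direct-sum}: if the additive group is the direct sum of two ideals $I_1$ and $I_2$, then $f(a_1+a_2)=(a_1,a_2)$ is an isomorphism of skew braces.

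Since $A=I_1+I_2$ and $I_1\cap I_2=\{0\}$, and both $(I_1,+)$ and $(I_2,+)$ are normal in $(A,+)$, the additive group is the internal direct product of $I_1$ and $I_2$. The first consequence is that $f$ is a well-defined bijection. The second is that elements of $I_1$ and $I_2$ commute additively: for $a_1\in I_1$ and $a_2\in I_2$, the additive commutator lies in $I_1\cap I_2=\{0\}$. Hence $f$ is an isomorphism of additive groups, where $I_1\times I_2$ carries the componentwise structure. What remains is multiplicativity of $f$. I would use the lambda map $\lambda$ from the preliminaries together with the identity $x\cdot y=x+\lambda_x(y)$.

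\textbf{Step 1: $I_2$ acts trivially on $I_1$.} Take $a_1,b_1\in I_1$ and $a_2\in I_2$. Then $\lambda_{a_2}(b_1)-b_1=a_2*b_1$. Since $I_1$ is $\lambda$-invariant, $\lambda_{a_2}(b_1)\in I_1$, so $a_2*b_1\in I_1$. On the other hand,
\[
a_2*b_1=-a_2+a_2\cdot b_1-b_1 .
\]
Because $(I_2,\cdot)$ is normal in $(A,\cdot)$ and additive and multiplicative cosets coincide, $a_2\cdot b_1=b_1\cdot a_2'$ for some $a_2'\in I_2$. This gives $a_2\cdot b_1\equiv b_1 \pmod{I_2}$ additively, so $a_2*b_1\in I_2$ (using normality of $(I_2,+)$). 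Therefore $a_2*b_1\in I_1\cap I_2=\{0\}$, i.e.\ $\lambda_{a_2}$ is the identity on $I_1$. Symmetrically, $\lambda_{a_1}$ is the identity on $I_2$.

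\textbf{Step 2: multiplicative decomposition.} For $a_1\in I_1$ and $a_2\in I_2$,
\[
a_1\cdot a_2=a_1+\lambda_{a_1}(a_2)=a_1+a_2 .
\]
Hence $a_1+a_2=a_1\cdot a_2=a_2\cdot a_1$. Also $\lambda_{a_1+a_2}=\lambda_{a_1}\lambda_{a_2}$, since $\lambda$ is a homomorphism on $(A,\cdot)$.

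\textbf{Step 3: $f$ preserves products.} Compute
\[
(a_1+a_2)\cdot(b_1+b_2)=(a_1+a_2)+\lambda_{a_1}\lambda_{a_2}(b_1)+\lambda_{a_1}\lambda_{a_2}(b_2).
\]
By Step 1, $\lambda_{a_1}\lambda_{a_2}(b_1)=\lambda_{a_1}(b_1)$. For the last term, $\lambda_{a_1}\lambda_{a_2}(b_2)=\lambda_{a_2}\lambda_{a_1}(b_2)$ because $a_1\cdot a_2=a_2\cdot a_1$, and this equals $\lambda_{a_2}(b_2)$ by Step 1. Since $I_1$ and $I_2$ commute additively, the sum can be rearranged as
\[
\bigl(a_1+\lambda_{a_1}(b_1)\bigr)+\bigl(a_2+\lambda_{a_2}(b_2)\bigr)=a_1\cdot b_1+a_2\cdot b_2 .
\]
Thus $f$ sends this product to $(a_1\cdot b_1,\,a_2\cdot b_2)$, which is exactly $f(a_1+a_2)\cdot f(b_1+b_2)$ in $I_1\times I_2$.

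The main obstacle is Step 1, where the ideal axioms have to be combined correctly: the key facts are $xI=x+I$ and the normality of both group structures of $I_2$. The remaining steps are bookkeeping.
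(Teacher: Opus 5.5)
Your proof is correct and complete. The additive decomposition gives a bijective additive homomorphism. Step~1 correctly combines the $\lambda$-invariance of $I_1$ with the multiplicative and additive normality of $I_2$ (through $xI_2=x+I_2$) to get $a_2*b_1\in I_1\cap I_2=\{0\}$. Steps~2--3 then verify multiplicativity.

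The paper gives no argument of its own for this lemma; it simply quotes it from \cite[Lemma 12.2.5]{cedoven}, so there is no proof in the paper to compare your route against. Your Step~1 is the standard observation that an ideal $I$ satisfies both $A*I\subseteq I$ and $I*A\subseteq I$, so $I_1*I_2$ and $I_2*I_1$ lie in $I_1\cap I_2=\{0\}$.

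One optional simplification of Step~3: once Step~2 gives $a_1\cdot a_2=a_2\cdot a_1=a_1+a_2$ for all $a_1\in I_1$, $a_2\in I_2$, the two ideals commute elementwise in $(A,\cdot)$. Hence
\[
(a_1+a_2)\cdot(b_1+b_2)=a_1a_2b_1b_2=(a_1b_1)(a_2b_2)=a_1b_1+a_2b_2
\]
follows immediately, without expanding the lambda maps.
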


	\begin{lem}\cite[Theorem 12.1.13]{cedoven} \label{p-comutator}
		Let $p$ be a prime number, and let $P$ be a finite $p$-group acting by
		automorphisms on a finite group $G$. If
		\[
		[P,\ldots,P,G]_m=\{1\}
		\]
		for some $m$, then $[P,G]$ is a $p$-group.
	\end{lem}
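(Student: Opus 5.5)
The plan is to pass to the semidirect product and show that $P$ is subnormal there, after which the conclusion follows from the fact that subnormal $p$-subgroups lie in $O_p$. Let $\Gamma=G\rtimes P$, with $G$ and $P$ identified with subgroups of $\Gamma$ as in the proof of \cref{lem:invariant}. The action of $P$ on $G$ is then conjugation in $\Gamma$, so the iterated commutators $[P,\ldots,P,G]_i$ are genuine group commutators in $\Gamma$. Put $K_0=G$ and $K_{i}=[K_{i-1},P]$ for $i\ge 1$. Up to the order in which the commutators are written, $K_i=[P,\ldots,P,G]_i$, so the hypothesis gives $K_m=\{1\}$. Each $K_i$ is $P$-invariant, since $P$ normalizes $[K_{i-1},P]$ whenever it normalizes $K_{i-1}$; this is the same argument as in \cref{lem:invariant}. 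Hence $H_i:=K_iP$ is a subgroup of $\Gamma$ for every $i$. We have $H_0=\Gamma$ and $H_m=P$.

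The key step is to show that $H_{i+1}\trianglelefteq H_i$ for each $i$, which exhibits $P$ as a subnormal subgroup of $\Gamma$. The group $H_i$ is generated by $K_i$ and $P$. Since $[K_i,P]$ is normal in $\langle K_i,P\rangle$ (a standard fact about commutator subgroups), $K_{i+1}$ is normal in $H_i$. It then remains to see that $P$ is normalized modulo $K_{i+1}$ by the generators of $H_i$. Elements of $P$ clearly do this. For $x\in K_i$ and $y\in P$ we have $xyx^{-1}=[x,y]\,y$ with $[x,y]\in K_{i+1}$, so $xPx^{-1}\subseteq K_{i+1}P=H_{i+1}$. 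Together these give $H_{i+1}\trianglelefteq H_i$. This step is the only part that needs care: one must check the commutator conventions (left versus right) and verify that $K_{i+1}$ is normal in all of $H_i$, not merely in $K_i$.

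Given the chain $P=H_m\trianglelefteq H_{m-1}\trianglelefteq\cdots\trianglelefteq H_0=\Gamma$, the standard fact that a subnormal $p$-subgroup of a finite group lies in $O_p(\Gamma)$ gives $P\le O_p(\Gamma)$. Since $O_p(\Gamma)$ is normal in $\Gamma$, we get $[P,G]\le [O_p(\Gamma),\Gamma]\le O_p(\Gamma)$. Therefore $[P,G]$ is a $p$-group. If a self-contained argument for the $O_p$ fact is preferred, one can argue by induction along the chain: $P\le O_p(H_{i+1})$, and $O_p(H_{i+1})$ is characteristic in $H_{i+1}\trianglelefteq H_i$, so it is a normal $p$-subgroup of $H_i$ and hence contained in $O_p(H_i)$.
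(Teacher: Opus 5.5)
The paper gives no proof of this lemma; it is quoted directly from \cite[Theorem 12.1.13]{cedoven}, so there is no in-paper argument to compare yours with. Your proof is correct and complete.

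In $\Gamma=G\rtimes P$ with the convention $[x,y]=xyx^{-1}y^{-1}$, the action commutator ${}^{a}g\,g^{-1}$ (the paper's $a\circ g-g$) is exactly the group commutator. Since $[H,K]=[K,H]$ as subgroups, your $K_i$ are precisely the $[P,\ldots,P,G]_i$. The semidirect product also covers non-faithful actions. That is exactly how the paper uses the lemma: for a Sylow subgroup of $(X,\cdot)$ acting through $\lambda$.

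The chain step is sound:
\begin{itemize}
\item $K_{i+1}=[K_i,P]$ is normal in $\langle K_i,P\rangle=H_i$.
\item $K_{i+1}\le K_i$, because $P$ normalizes $K_i$.
\item It suffices to check $gH_{i+1}g^{-1}\subseteq H_{i+1}$ for $g\in K_i\cup P$, since normalizers are subgroups and $\Gamma$ is finite.
\end{itemize}

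Conceptually, you are re-deriving in this special case two classical facts. First, $P$ is subnormal in $\Gamma$ once some iterated commutator $[\Gamma,P,\ldots,P]$ lies in $P$; here your computation gives $[H_i,P]\le H_{i+1}$. Second, a subnormal $p$-subgroup lies in $O_p$.

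The route is elementary and proves slightly more than stated. You get $[P,G]\le G\cap O_p(\Gamma)=O_p(G)$, so $[P,G]$ lies inside the largest normal $p$-subgroup of $G$.
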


	\begin{lem}\cite[Theorem 12.1.11]{cedoven}\label{A-nilpotent}
		Let $A$ be a finite group that acts on a finite group $G$ by automorphisms. If
		\[
		[A,\ldots,A,G]_m=\{1\}
		\]
		for some $m$, then there exists a positive integer $n$ such that
		\[
		[[A,\ldots,A,A]_n,G]=\{1\}.
		\]
		Furthermore, if the action of $A$ on $G$ is faithful, then $A$ is nilpotent.
	\end{lem}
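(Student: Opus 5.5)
The plan is to work inside the semidirect product $\Gamma=G\rtimes A$, as in the proof of Lemma~\ref{lem:invariant}, where the action of $A$ on $G$ becomes conjugation and $[A,\ldots,A,G]_i$ becomes an honest iterated commutator subgroup of $\Gamma$. Put $G_0=G$ and $G_{i}=[A,G_{i-1}]$ for $i\geq 1$, so that $G_i=[A,\ldots,A,G]_i$ and $G_m=1$ by hypothesis.

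\textbf{Step 1: each $G_i$ is normal in $\Gamma$.} I will show by induction on $i$ that every $G_i$ is an $A$-invariant normal subgroup of $G$, i.e.\ normal in $\Gamma$. The argument of Lemma~\ref{lem:invariant}, applied to the action of $A$ on the $A$-invariant group $G_{i-1}$, shows that $G_i=[A,G_{i-1}]$ is $A$-invariant. Normality in $G$ needs a separate check. For $g\in G$ we have $[a,x]^g=[a^g,x^g]$, and $a^g=a\,[a,g]$ with $[a,g]\in G$, so expanding with the commutator identities gives an element of $[A,G_{i-1}]\,[G,G_{i-1}]$. So the point to verify is that $[G,G_{i-1}]\leq G_i$, which I think is the main obstacle in this step.

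\textbf{Step 2: $[\gamma_j(A),G_i]\leq G_{i+j}$.} I will prove this by induction on $j$, for all $i$ simultaneously. The case $j=1$ is the definition. For the inductive step, $\gamma_{j+1}(A)=[\gamma_j(A),A]$, and I apply the three subgroups lemma in $\Gamma$ with the normal subgroup $N=G_{i+j+1}$. The two other commutators land in $N$:
\[
[[A,G_i],\gamma_j(A)]=[G_{i+1},\gamma_j(A)]\leq G_{i+1+j},\qquad [[G_i,\gamma_j(A)],A]\leq [G_{i+j},A]=G_{i+j+1}.
\]
Both inclusions use the induction hypothesis together with Step 1 and the monotonicity of commutators. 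The three subgroups lemma then yields $[\gamma_{j+1}(A),G_i]\leq G_{i+j+1}$.

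\textbf{Step 3: conclusion.} Taking $i=0$ and $j=m$ in Step 2 gives $[\gamma_m(A),G]\leq G_m=1$. Since $\gamma_m(A)=[A,\ldots,A,A]_{m-1}$, this proves the first claim with $n=m-1$. If the action is faithful, then $\gamma_m(A)$ acts trivially on $G$ and so must be trivial, hence $A$ is nilpotent of class at most $m-1$.

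As for the main obstacle: Step 1 is where I expect the only real care to be needed, specifically in showing $[G,G_{i-1}]\leq G_i$. If that direct route proves awkward, the fallback is to replace $G_i$ by its normal closure $\widetilde G_i$ in $\Gamma$. This is harmless because Step 2 only needs $\widetilde G_m=1$, and $\widetilde G_m$ is trivial because the normal closure of the trivial group $G_m$ is trivial. It is also consistent with the rest of the argument, since $A$ still centralizes each factor $\widetilde G_{i-1}/\widetilde G_i$. Steps 2 and 3 are then a direct application of the three subgroups lemma.
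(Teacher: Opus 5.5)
\textbf{The gap is Step~1, and Steps~2--3 depend on it.} In Step~2 the three subgroups lemma with $N=G_{i+j+1}$ needs $N$ to be normal in $\langle \gamma_j(A),A,G_i\rangle$. A priori, though, $G_{i+j+1}=[A,G_{i+j}]$ is only normalized by $A$ and by $G_{i+j}$. What the hypothesis gives you for free is an $A$-invariant \emph{subnormal} series $G=G_0\trianglerighteq G_1\trianglerighteq\cdots\trianglerighteq G_m=1$ with $A$ acting trivially on each factor, not a normal series. You correctly reduce Step~1 to the inclusion $[G,G_{i-1}]\le G_i$, but that inclusion is false in general. Take $G=D_8=\langle r,s\rangle$ and $A=\langle\alpha\rangle$ with $\alpha(r)=r$, $\alpha(s)=sr$. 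Then $G_1=\langle r\rangle$ and $G_2=1$, while $[G,G_1]=\langle r^2\rangle\neq 1$. In this example the $G_i$ happen to be normal anyway, but it shows your route to Step~1 cannot work.

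The fallback does not repair this, because the claim that $A$ still centralizes $\widetilde G_{i-1}/\widetilde G_i$ is asserted, not proved, and it is the same difficulty in disguise. For $x\in G_{i-1}$, $h\in G$ and $a\in A$ you have $[x^h,a]=[x,a^{h^{-1}}]^h$ and $a^{h^{-1}}=ad$ with $d\in[A,G]=G_1$. Hence $[x^h,a]\in\widetilde G_i$ if and only if $[x,d]\in\widetilde G_i$. So the claim amounts to $[G_{i-1},G_1]\le\widetilde G_i$, an inclusion of type $[G_j,G_k]\le G_{j+k}$ whose usual three-subgroups proof again requires the $G_i$ to be normal. If instead you take normal closures at each stage, $K_i=[K_{i-1},A]^{\Gamma}$, you get a normal series but lose $K_m=1$.

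The standard way around this is P.~Hall's theorem on stability groups: a group of automorphisms stabilizing a subnormal series of length $m$ is nilpotent of class at most $\binom{m}{2}$. Apply it to the image of $A$ in $\operatorname{Aut}(G)$ with the series $(G_i)$. This gives $[\gamma_{\binom m2+1}(A),G]=1$, and in the faithful case the nilpotency of $A$. This is also why the lemma, which the paper simply quotes from the literature, only asserts the existence of \emph{some} $n$. Your sharper value $n=m-1$ is Kaloujnine's bound, which requires the series to be normal in $G$, and you have not established that.
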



	\section{Main results}
	
	\begin{lem}\label{lem:ideal}
		Let $X$ be a finite skew brace. Set $G=(X,+)$ and $A=(X,\cdot)$.
		Let $P \in \operatorname{Syl}_p(A)$ such that
		\[
		\lambda(P) \unlhd \lambda(A)
		\]
		and suppose that $[P,G]$ is a $p$-subgroup of $G$. Then
		\[
		P*X=[P,G]
		\]
		is an ideal of $X$.
	\end{lem}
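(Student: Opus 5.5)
The plan is to verify the three defining properties of an ideal for $I:=[P,G]=P*X$: that $(I,+)\unlhd G$, that $I$ is $\lambda$-invariant, and that $(I,\cdot)\unlhd A$. Along the way we also check that $I$ is a subbrace. The first two properties are essentially formal. The group $I=[P,G]$ is generated by commutators $[u,g]$, so it is a normal subgroup of $G$, as noted in the proof of Lemma \ref{lem:invariant}. For $\lambda$-invariance, apply Lemma \ref{lem:invariant} to the group $\lambda(A)$ acting on $G$, with $B=\lambda(P)\unlhd\lambda(A)$. Since $[P,G]=[\lambda(P),G]$, this shows that $I$ is $\lambda_x$-invariant for every $x\in X$.

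Next we show that $I$ is a multiplicative subgroup. Since $x\cdot y=x+\lambda_x(y)$, $\lambda$-invariance of the additive subgroup $I$ gives $I\cdot I\subseteq I$. As $X$ is finite, this makes $(I,\cdot)$ a subgroup of $A$. Thus $I$ is a subbrace, and $|I|$ is a power of $p$ by hypothesis.

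The main step is normality of $(I,\cdot)$ in $A$. We will show that $x\cdot I=x+I=I+x=I\cdot x$ for every $x\in X$. For the left coset, $x\cdot i=x+\lambda_x(i)$ and $\lambda_x(I)=I$, so $x\cdot I=x+I$. Additive normality of $I$ then gives $x+I=I+x$.

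For the right coset, write
\[
i\cdot x=i+\lambda_i(x)=i+x+(\lambda_i(x)-x)=i+x+[\lambda_i,x],
\]
so it suffices to show $[\lambda_i,x]\in I$ for every $i\in I$. This is where the hypotheses are used. Since $(I,\cdot)$ is a $p$-subgroup of $A$, Sylow's theorem gives $a\in A$ with $I\le P^a$. Because $\lambda(P)\unlhd\lambda(A)$, we have $\lambda(P^a)=\lambda(P)^{\lambda_a}=\lambda(P)$. Hence $\lambda_i=\lambda_q$ for some $q\in P$, and therefore $[\lambda_i,x]=[q,x]\in[P,G]=I$. This yields $I\cdot x\subseteq I+x$, and equality follows because both sets have cardinality $|I|$.

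Combining the two cosets gives $x\cdot I\cdot x^{-1}=I$ for all $x\in X$, so $(I,\cdot)\unlhd A$ and $I=P*X$ is an ideal. I expect the only delicate point to be the right-coset computation: one must recognise that $I$, being a $p$-subgroup of $A$, sits inside a conjugate of $P$. Normality of $\lambda(P)$ then lets us replace $\lambda_i$ by $\lambda_q$ with $q\in P$.
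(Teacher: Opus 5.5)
Your proof is correct and follows essentially the same route as the paper. Both arguments get $\lambda$-invariance and additive normality from Lemma \ref{lem:invariant}, multiplicative closure from $x\cdot y=x+\lambda_x(y)$, the key inclusion $\lambda(I)\le\lambda(P)$, and then $x\cdot I=x+I=I+x=I\cdot x$. For $\lambda(I)\le\lambda(P)$ the paper uses uniqueness of the normal Sylow $p$-subgroup $\lambda(P)$ of $\lambda(A)$, while you use a conjugate $P^a\supseteq I$; these amount to the same argument.

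One small slip: when $(X,+)$ is nonabelian the identity should read $i\cdot x=i+[\lambda_i,x]+x$, not $i+x+[\lambda_i,x]$. The corrected form gives $I\cdot x\subseteq I+x$ directly.
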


	\begin{proof}
		Set
		\[
		I=[P,G]=[\lambda(P),G].
		\]
		By Lemma \ref{lem:invariant}, since $\lambda(P)\unlhd \lambda(A)$, the subgroup $I$ is a $\lambda(A)$-invariant normal subgroup of $G$.
		
		We first show that $I$ is a subgroup of $A$. If $x,y\in I$, then
		\[
		xy=x+\lambda_x(y)\in I,
		\]
		because $I$ is $\lambda(A)$-invariant. Hence $I\leq A$.
		
		Since $I$ is a $p$-subgroup of $G$, it follows from \(I\leq A\) that $I$ is also a $p$-subgroup of $A$. Therefore $\lambda(I)$ is a $p$-subgroup of
		$\lambda(A)$. Moreover, since $P\in \operatorname{Syl}_p(A)$, the subgroup $\lambda(P)$ is a Sylow $p$-subgroup of $\lambda(A)$. As $\lambda(P)\unlhd \lambda(A)$, it is the unique Sylow $p$-subgroup of $\lambda(A)$. Hence
		\[
		\lambda(I)\leq \lambda(P).
		\]
		Then we get
		\[
		I*X=[\lambda(I),G]\leq [\lambda(P),G]=I.
		\]
		
		On the other hand, since $I$ is $\lambda(A)$-invariant, we have
		\[
		X*I=\langle a*i=\lambda_a(i)-i \mid a\in X,\ i \in I \rangle_{+}\subseteq I
		\]
		
		It remains to prove that $I\unlhd A$. For every $a\in A$, using $ab=a+a*b+b$, we obtain
		\[
		aI=a+a*I+I=a+I
		\]
		and
		\[
		Ia=I+I*a+a=I+a.
		\]
		Since $I\unlhd G$, we have $a+I=I+a$. Therefore $aI=Ia$ for all $a\in A$.
		Thus $I\unlhd A$. Consequently, $I$ is an ideal of $X$.
	\end{proof}

We are now ready to prove Theorem A. For a set of primes $\pi$, we will denote the set of all Hall $\pi$-subbraces of $X$ by $\operatorname{Hall}_{\pi}(X)$.

	\begin{theo} 
		\label{theo:Sylow}
		Let $X$ be a left nilpotent skew brace  and let $\pi$ be a set of primes. Then the following holds:
		\begin{enumerate}
			\item[(a)] $\operatorname{Hall}_{\pi}(X, \cdot)=\operatorname{Hall}_{\pi}(X)$.
			\item[(b)] $\operatorname{Syl}_p(X,\cdot)=\operatorname{Syl}_p(X)\neq \emptyset$ for every prime $p$ dividing $|X|$.
			\item[(c)]  Every $p$-subbrace of $X$ is contained in a Sylow $p$-subbrace of $X$.
		\end{enumerate}
	\end{theo}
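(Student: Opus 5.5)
The plan is to use left nilpotency to show that $\lambda(A)$ is nilpotent, and then to use Lemma \ref{lem:ideal} to produce, for each prime $p$, an ideal $I_p=P*X$ that is a $p$-group. All three parts then follow from one closure argument. Set $G=(X,+)$ and $A=(X,\cdot)$, and write $X^{m}=[\lambda(A),\ldots,\lambda(A),G]_{m-1}=0$.

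First, the group $\lambda(A)\le \operatorname{Aut}(G)$ acts faithfully on $G$, and its iterated commutator with $G$ vanishes. So Lemma \ref{A-nilpotent} shows that $\lambda(A)$ is nilpotent. For $P\in\operatorname{Syl}_p(A)$, the image $\lambda(P)$ is a Sylow $p$-subgroup of $\lambda(A)$, hence normal in $\lambda(A)$. Since $\lambda(P)\le\lambda(A)$, we also have $[\lambda(P),\ldots,\lambda(P),G]_{m-1}=0$. Lemma \ref{p-comutator} then gives that $[P,G]$ is a $p$-group. Lemma \ref{lem:ideal} therefore applies: $I_p:=P*X=[P,G]$ is an ideal of $X$ and a $p$-group.

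Now fix a Hall $\pi$-subgroup $H$ of $A$, and put $I_\pi=\sum_{p\in\pi}I_p$. This is a sum of ideals, so it is an ideal, and it is a $\pi$-group because the $I_p$ are normal $p$-subgroups of $G$ for distinct primes. Since $\lambda(A)$ is nilpotent, $\lambda(H)$ is its Hall $\pi$-subgroup, namely $\lambda(H)=\prod_{p\in\pi}\lambda(P_p)$. Consequently
\[
[\lambda(H),G]=I_\pi .
\]
Also, $(I_\pi,\cdot)$ is a normal $\pi$-subgroup of $A$. Thus $HI_\pi$ is a $\pi$-subgroup containing $H$, and maximality of $H$ forces $I_\pi\le H$.

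Closure of $H$ under $+$ comes next. Take $h,k\in H$ and write $h+k=h\cdot\lambda_h^{-1}(k)$. We have $\lambda_h^{-1}(k)=k+i$ with $i\in[\lambda(H),G]=I_\pi$. Moreover $k+i=k\cdot\lambda_k^{-1}(i)$, and $\lambda_k^{-1}(i)\in I_\pi\le H$ because $I_\pi$ is $\lambda$-invariant. Hence $h+k\in H$. For negatives, $-h=\lambda_h(h^{-1})\in h^{-1}+I_\pi$, and the same computation places $-h$ in $H$. So $H$ is a subbrace, and its order is the $\pi$-part of $|X|$; that is, $H\in\operatorname{Hall}_\pi(X)$. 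The reverse inclusion is immediate: a Hall $\pi$-subbrace is in particular a $\pi$-subgroup of $A$ of maximal $\pi$-order. This proves (a).

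Part (b) is the case $\pi=\{p\}$ of (a), and nonemptiness follows from Sylow's theorem in $A$. For (c), a $p$-subbrace $Q$ is a $p$-subgroup of $A$, so it lies in some $P\in\operatorname{Syl}_p(A)$, which is a Sylow $p$-subbrace by (b). The step I expect to be most delicate is the identification $[\lambda(H),G]=I_\pi$ together with $I_\pi\le H$, which is where nilpotency of $\lambda(A)$ is genuinely used. The remaining verifications are routine manipulations with $xy=x+\lambda_x(y)$.
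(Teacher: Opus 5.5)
Your proof is correct and uses the same ingredients as the paper's: nilpotency of $\lambda(A)$ from Lemma~\ref{A-nilpotent}, the $p$-group ideals $I_p=[P,G]$ from Lemmas~\ref{p-comutator} and~\ref{lem:ideal}, and the fact that these normal $\pi$-subgroups of $(X,\cdot)$ lie in $H$. The only difference is that the paper inducts on $|X|$, quotienting by one nonzero $I_p$ at a time until $H\le\ker\lambda$, whereas you combine all $I_p$, $p\in\pi$, into one ideal $I_\pi\le H$ with $[\lambda(H),G]=I_\pi$ and check additive closure directly; this is the same argument done in one step, since it amounts to $H/I_\pi\le\ker\overline{\lambda}$ in $X/I_\pi$.
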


	\begin{proof}
		$(a)$ If $H$ is a Hall $\pi$-subbrace of $X$, then obviously $H$ is a Hall $\pi$-subgroup of $(X, \cdot)$. We argue that the converse is also true,
		 that is, each member $H$ of $\operatorname{Hall}_\pi(X,\cdot)$ is a Hall $\pi$-subbrace of $X$. Set $G=(X,+)$ and $A=(X,\cdot)$. Let $H\in \operatorname{Hall}_\pi(A)$. We proceed by induction on $|X|.$ 
		
		Let $P\in \operatorname{Syl}_p(H)$. Note that $P$ is also a Sylow $p$-subgroup of $A$. Since $X$ is left nilpotent, there exists $n\geq 1$ such that $X^{n+1}=0$, and so
		$$[A,\ldots ,A,G]_n=1.$$ It then follows by Lemma \ref{A-nilpotent} that $A/\ker\lambda$ is nilpotent, and hence $\lambda(P) \unlhd \lambda(A)$.
		
		We have  $[P,\ldots ,P,G]_n=1$, and so $I=[P,G]$ is a $p$-subgroup of $G$ by Lemma \ref{p-comutator}. Hence, we get that $I=[P,G]$ is an ideal of $X$ by Lemma \ref{lem:ideal}. Notice that the prime $p$ is
		arbitrary so we can create an ideal $I_i=[P_i,G]$ for each $p_i\in \pi(H)$. Now we argue that we may assume that $I_i\neq 0$ for some $i$. If $I_i=0$ for all $i$, then $[\lambda(P_i),G]=1$ for all $i$. 
		It follows that $[\lambda(H),G]=1$, and so $H\leq \operatorname{ker } \lambda$. It follows that the two operations coincide on $H$, and consequently $H$ is a subbrace of $X$ as desired.
		Thus we may assume that $I=I_i\neq 0$.

		Notice that $I\leq P\leq H$ as $I$ is a normal $p$-subgroup of $A$, and so $H/I \in \operatorname{Hall}_\pi(A/I)=\operatorname{Hall}_\pi(X/I, \cdot)$. By induction applied to the quotient brace $X/I$, we have
		$H/I$ is a Hall $\pi$-subbrace of $X/I$. It follows that $H$ is a Hall $\pi$-subbrace of $X$, as required.
		
		$(b)$ This directly follows from part $(a)$ by taking $\pi=\{p\}$, and using the fact that $\operatorname{Syl}_p(X,\cdot)\neq \emptyset$ by the Sylow theorems for finite groups.
		
		$(c)$ Let $U$ be a $p$-subbrace of $X$. Since $U$ is a $p$-subgroup of $A$, there exists $P\in \operatorname{Syl}_p(A)$ such that $U\leq P$. By part $(a)$, $P$ is a Sylow $p$-subbrace of $X$ containing $U$, and the result follows.
	\end{proof}

It should be noted that in general, the solvability of the multiplicative group of a skew brace does not imply that its additive group is solvable.
	

	\begin{corollary}
		Let $X$ be a left nilpotent skew brace. If $(X,\cdot)$ is solvable, then $(X,+)$ is also solvable.
	\end{corollary}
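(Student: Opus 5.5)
We argue by induction on $|X|$, reusing the ideal constructed in the proof of Theorem \ref{theo:Sylow}. Set $G=(X,+)$ and $A=(X,\cdot)$, and assume $A$ is solvable.

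\textbf{Base case.} If $\lambda(A)$ acts trivially on $G$, then $X=\ker\lambda$ is a trivial skew brace. In that case $G=A$ as groups, so $G$ is solvable.

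\textbf{Construction of the ideal.} Otherwise, left nilpotency gives $[A,\ldots,A,G]_n=1$ for some $n$. By Lemma \ref{A-nilpotent}, $\lambda(A)\cong A/\ker\lambda$ is nilpotent, so for every prime $p$ and every $P\in\operatorname{Syl}_p(A)$ we have $\lambda(P)\unlhd\lambda(A)$. By Lemma \ref{p-comutator}, $[P,G]$ is a $p$-subgroup of $G$. Lemma \ref{lem:ideal} then shows that $I_p=[P,G]$ is an ideal of $X$. Since $\lambda(A)$ is generated by the $\lambda(P)$, $p\in\pi(X)$, and the action is nontrivial, some $I=I_p$ is nonzero.

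\textbf{Inductive step.} The quotient $X/I$ is a left nilpotent skew brace, since the image of the left series of $X$ gives the left series of $X/I$. Its multiplicative group $A/I$ is solvable, being a quotient of $A$. Because $|X/I|<|X|$, induction shows that $(X/I,+)=G/I$ is solvable. Moreover, $(I,+)$ is a $p$-group, hence nilpotent and in particular solvable. Therefore $G$ is an extension of a solvable group by a solvable group, so $G$ is solvable.

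\textbf{Main checks.} The two points needing care are:
\begin{enumerate}
\item[(i)] that left nilpotency passes to quotients, which is immediate because $(X/I)^{k}=(X^{k}+I)/I$;
\item[(ii)] the existence of a nonzero $I_p$ when $\lambda$ is nontrivial, which is exactly the argument already given in part (a) of Theorem \ref{theo:Sylow}.
\end{enumerate}

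Solvability of $A$ is used only to keep the inductive hypothesis valid on $X/I$. Indeed, the argument never uses it in the base case or for $I$, which suggests that the conclusion may hold even without assuming $A$ solvable, as long as the trivial-brace base case is handled. When $X$ is trivial, however, $G\cong A$, so the hypothesis on $A$ is genuinely needed there.
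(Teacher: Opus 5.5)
Your argument is correct, but it takes a genuinely different route from the paper's. The paper gets the corollary in two lines from Theorem~A(a). A solvable $(X,\cdot)$ has a Hall $\pi$-subgroup for every $\pi$, and each of these is a Hall $\pi$-subbrace. Hence $(X,+)$ has Hall $\pi$-subgroups for every $\pi$, and it is solvable by P.~Hall's converse criterion.

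You bypass the statement of Theorem~A and Hall theory entirely, and run the ideal construction from its proof directly:
\begin{enumerate}
\item[(i)] Whenever $\lambda$ is nontrivial there is a nonzero ideal $I=[P,G]$ of $p$-power order. Some $I_p\neq 0$ because $\lambda(A)$ is generated by one $\lambda(P_p)$ per prime.
\item[(ii)] Left nilpotency passes to $X/I$. Your check $(X/I)^k=(X^k+I)/I$ is right, using $I\trianglelefteq G$ and $\lambda_a(I)=I$.
\item[(iii)] You finish with closure of solvability under extensions.
\end{enumerate}

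The paper's proof is shorter once Theorem~A is available, but Hall's criterion rests on Burnside's $p^aq^b$-theorem. Yours is elementary modulo the lemmas of Section~2.

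Your argument also yields more. Since $|I|$ is a power of $p$, $(I,\cdot)$ is a $p$-group as well. So the same induction shows that $(X,+)$ is solvable \emph{if and only if} $(X,\cdot)$ is. Iterating gives a chain of ideals with prime-power factors ending at an ideal $J$ with $X/J$ trivial, so $G/J=A/J$. The reverse implication is not directly available from the Hall argument, since Theorem~A only concerns Hall subgroups of $(X,\cdot)$.

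Finally, your closing remark contradicts itself. Solvability of $A$ is used exactly in the trivial case $G=A$, and it cannot be dropped. The trivial skew brace on $A_5$ is left nilpotent and has nonsolvable additive group.
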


	\begin{proof}
		Assume that $(X,\cdot)$ is solvable. Then $(X,\cdot)$ has a Hall $\pi$-subgroup $H$ for every set of primes $\pi$. By Theorem \ref{theo:Sylow}, 
		$H$ is a Hall $\pi$-subbrace of $X$. In particular, $(X,+)$ has a Hall $\pi$-subgroup for every set of primes $\pi$. Hence $(X,+)$ is solvable by Hall's solvability criterion for finite groups.

		\end{proof}

	\begin{lem}\label{lem:nilpotency-products-subbraces}
		Let $Y_1,\ldots,Y_k$ be left nilpotent skew braces. Then
		\[
		\ell\left(\prod_{i=1}^k Y_i\right)
		=
		\max_{1\leq i\leq k}\ell(Y_i).
		\]
		Moreover, if $X$ is a subbrace of a left nilpotent skew brace $Y$, then
		\[
		\ell(X)\leq \ell(Y).
		\]
	\end{lem}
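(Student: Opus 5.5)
For the first assertion I would compute the left series of the direct product $Y=\prod_{i=1}^k Y_i$ componentwise. In $Y$ both operations are defined coordinatewise, so $\lambda_{(a_i)}((b_i))=(\lambda_{a_i}(b_i))$ and therefore $(a_i)*(b_i)=(a_i*b_i)$. The key claim is that
\[
Y^{n}=\prod_{i=1}^k Y_i^{n}\qquad\text{for all } n\geq 1,
\]
and I would prove it by induction on $n$; the case $n=1$ is trivial.

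For the inclusion $\subseteq$, assume $Y^n=\prod_i Y_i^n$. Each generator $a*x$ with $a\in Y$ and $x\in Y^n$ has $i$-th coordinate $a_i*x_i\in Y_i*Y_i^n=Y_i^{n+1}$. Since $\prod_i Y_i^{n+1}$ is an additive subgroup, the generated subgroup $Y^{n+1}$ is contained in it.

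For the inclusion $\supseteq$, take $a$ and $x$ supported only in coordinate $i$. Then $a*x$ is the element with $a_i*x_i$ in slot $i$ and $0$ elsewhere, because $0*y=0$ and $z*0=0$. Such elements generate the $i$-th factor $Y_i^{n+1}$, embedded as a subgroup. Summing over $i$ gives all of $\prod_i Y_i^{n+1}$. Consequently $Y^{m+1}=0$ if and only if $Y_i^{m+1}=0$ for every $i$, which gives $\ell(Y)=\max_i \ell(Y_i)$.

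For the second assertion, let $X$ be a subbrace of $Y$. I would show $X^n\subseteq Y^n$ by induction on $n$. The case $n=1$ holds since $X\subseteq Y$. If $X^n\subseteq Y^n$, then every generator $a*x$ of $X^{n+1}$, with $a\in X\subseteq Y$ and $x\in X^n\subseteq Y^n$, is a generator of $Y^{n+1}$. Here the star operation of $X$ is the restriction of that of $Y$, since both operations are inherited. The additive subgroup generated in $X$ is the same set as the one generated in $Y$, so $X^{n+1}\subseteq Y^{n+1}$. Taking $n=\ell(Y)$ gives $X^{\ell(Y)+1}=0$, hence $\ell(X)\leq\ell(Y)$.

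There is no serious obstacle here. The only point needing care is the inclusion $\supseteq$ in the product, namely checking that $0*y=0$ and $z*0=0$, so that single-coordinate generators land in a single factor.
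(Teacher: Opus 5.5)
Your proof is correct and follows the same route as the paper: for the product you prove $\bigl(\prod_i Y_i\bigr)^n=\prod_i Y_i^n$, and for a subbrace you prove $X^n\subseteq Y^n$ by induction. You give more detail than the paper, which only asserts that the left series is computed componentwise; your check of the reverse inclusion via single-coordinate generators (using $0*y=0=z*0$) is the step the paper leaves implicit.
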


	\begin{proof}
		For the direct product, the left series is computed componentwise. Hence
		\[
		\left(\prod_{i=1}^k Y_i\right)^{n}
		=
		\prod_{i=1}^k Y_i^{n}
		\]
		for every $n\geq 1$. Therefore, we have
		\[
		\ell\left(\prod_{i=1}^k Y_i\right)
		=
		\max_{1\leq i\leq k}\ell(Y_i).
		\]
		
		Now suppose that $X$ is a subbrace of $Y$. Then, by induction on $n$,
		\[
		X^{n}\leq Y^{n}
		\]
		for every $n\geq 1$. Hence, if $Y^{m}=0$, then also $X^{m}=0$. Thus,
		$
		\ell(X)\leq \ell(Y).
		$
	\end{proof}

	\begin{theo} \label{thm left nil-lenght bound}
		Let $X$ be a left nilpotent skew brace. Then
		\[
		\ell(X)\leq \max
		\Bigl\{
		\ell(P)\mid P\in \mathrm{Syl}_{p_i}(X),\; p_i\in \pi(X)
		\Bigr\}
		+1.
		\]
	\end{theo}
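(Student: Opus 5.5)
The plan is to compare the left series of $X$ with the left series of suitable Sylow subbraces, one prime at a time, after the first step of the series. Set $G=(X,+)$ and $A=(X,\cdot)$, and for each $p\in\pi(X)$ fix $P_p\in\operatorname{Syl}_p(A)$. By Theorem~\ref{theo:Sylow}(b), each $P_p$ is a Sylow $p$-subbrace of $X$.

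As in the proof of Theorem~\ref{theo:Sylow}, Lemma~\ref{A-nilpotent} shows that $\lambda(A)$ is nilpotent. Hence $\lambda(A)=\prod_p\lambda(P_p)$, the factors commute pairwise and $\lambda(P_p)\unlhd\lambda(A)$. Lemmas~\ref{p-comutator} and~\ref{lem:ideal} then show that $I_p=[P_p,G]$ is an ideal of $X$ and a $p$-group. Since $I_p$ is a normal $p$-subgroup of $A$, we get $I_p\le P_p$. From the commutator expansion $[ab,g]=[a,b\circ g]+[b,g]$ and the factorization of $\lambda(A)$, I expect
\[
X^2=[A,G]=\sum_p I_p .
\]

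The key step is to show that $A$ acts on $I_p$ only through $\lambda(P_p)$. Let $Q$ be a Hall $p'$-subgroup of $\lambda(A)$, namely the product of the $\lambda(P_q)$ with $q\neq p$. Then $Q$ acts coprimely on the $p$-group $I_p$, which is $\lambda(A)$-invariant by Lemma~\ref{lem:invariant}. Left nilpotency gives $[Q,\ldots,Q,I_p]_m=0$ for some $m$. Standard coprime action gives $[Q,I_p,I_p]$-type stabilization $[Q,Q,I_p]=[Q,I_p]$, so by iterating we get $[Q,I_p]=0$. This coprime-action step is where I expect the main care to be needed. The subtlety is that $G$ need not be nilpotent, so I argue only inside the invariant $p$-group $I_p$.

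Consequently, for $n\ge1$,
\[
X^{n+1}=[A,\ldots,A,X^2]_{n-1}=\sum_p[P_p,\ldots,P_p,I_p]_{n-1}.
\]
Here I use that each $I_p$ is $A$-invariant, so the sum is preserved under commutation with $A$ and splits into summands. Since $I_p\le P_p=P_p^1$ and $P_p$ is a subbrace, induction gives
\[
[P_p,\ldots,P_p,I_p]_{n-1}\subseteq P_p*(P_p*\cdots(P_p*P_p))=P_p^{\,n}.
\]
Now take $n=\max_p\ell(P_p)+1$. Then every $P_p^{\,n}=0$, so $X^{n+1}=0$, which gives $\ell(X)\le\max_p\ell(P_p)+1$.

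Finally, the bound in the statement is over all Sylow subbraces. Any Sylow $p$-subbrace lies in $\operatorname{Syl}_p(A)$, and these are conjugate in $A$. Alternatively, I can simply apply the argument with whichever $P_p$ realizes the maximum, since any choice of the $P_p$ works. So the stated inequality follows.
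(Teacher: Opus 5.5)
Your proof is correct, but it handles the case of several primes by a different route from the paper. The paper inducts on $|\pi(A/\ker\lambda)|$. When $\lambda(A)=\lambda(P)$, it gets $[A,G]=[P,G]=I\subseteq P$ and hence $X^{m+2}=[P,\ldots,P,I]_m\subseteq P^{m+1}=0$; this is exactly your computation for a single summand. For several primes, it embeds $X$ into $\prod_i X/I_i$ using $\bigcap_i I_i=0$. It then notes that $P_i/I_i\le\ker\overline{\lambda_i}$, so each quotient involves fewer primes in the image of its lambda map, and finishes with Lemma~\ref{lem:nilpotency-products-subbraces}.

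You avoid both the induction and the quotients. The coprime-action identity $[[I_p,Q],Q]=[I_p,Q]$, together with left nilpotency, shows that the Hall $p'$-subgroup $Q$ of $\lambda(A)$ centralizes $I_p$. So from the second term on, the left series splits into $p$-primary pieces: $X^{n+1}=\bigoplus_p[P_p,\ldots,P_p,I_p]_{n-1}\subseteq\bigoplus_p P_p^{\,n}$. This gives a more explicit description of the left series. It also shows directly that the bound holds for any single choice of one Sylow subbrace per prime, so you do not need the paper's selection of $P_i$ with maximal $\ell(P_i)$. The price is an appeal to the coprime-action lemma, which the paper's route does not use.

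Two justifications should be tightened.
\begin{enumerate}
\item The splitting of $[A,\sum_p J_p]$ (with $J_p\le I_p$) into $\sum_p[A,J_p]$ does not follow from $A$-invariance alone. What makes it work is that the $I_p$ are normal subgroups of $G$ of pairwise coprime orders, hence commute elementwise. So $X^2$ is their internal direct sum, and $\lambda_a(\sum_p x_p)-\sum_p x_p=\sum_p(\lambda_a(x_p)-x_p)$ with each summand in $I_p$.
\item In your last paragraph, conjugacy of Sylow subgroups of $(X,\cdot)$ is irrelevant. Conjugation in $(X,\cdot)$ is not a brace automorphism, so it would not show that $\ell(P)$ is independent of $P$. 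The stated inequality is immediate anyway, because $\max_p\ell(P_p)$ for your chosen family is at most the maximum over all Sylow subbraces.
\end{enumerate}
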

	
	\begin{proof}
		Set $G=(X,+)$ and $A=(X,\cdot)$ and let
		$
		\pi=\pi(A/\ker\lambda).
		$
		
		If $\pi=\varnothing$,  then $A=\ker \lambda$, and so $X$ is a trivial skew brace for which the claim is obviously true. Hence we may assume that $\pi\neq\varnothing.$
		We proceed by induction on $|\pi|$.
		Suppose first that $|\pi|=1$, that is, $\pi=\{p\}.$
		
		Let $P\in \operatorname{Syl}_p(A)$ such that $\ell(P)$ is maximal among all $P\in\operatorname{Syl}_p(A)$. Then $P\in \operatorname{Syl}_p(X)$ by Theorem \ref{theo:Sylow}. Set $J=(P,+)$.
		
		Notice that for any $Q\in \operatorname{Syl}_q(X)$ with $p\neq q$, we have $Q\leq \ker\lambda$ as $A/\ker\lambda$ is a $p$-group. It follows that
		$\ell(P)\geq 1\geq \ell(Q)$, and so $P$ has the maximum left nilpotency class among all Sylow subbraces of $X$.
		Let $\ell(P)=m$ and $\ell(X)=n$.
		Then
		\[
		[\underbrace{P,P,\ldots,P}_{m},J]=0 \ \text{and } \ [\underbrace{A,A,\ldots,A}_{n},G]=0.
		\]
		
		Note that since $\lambda(P) =\lambda(A)$, we have
		\[
		[A,G]=[\lambda(A),G]
		=[\lambda(P),G]
		=[P,G],
		\] 
		and this implies that $[A,A,\ldots,A,G]_{m+1}=[P,P,\ldots,P,G]_{m+1}=[P,P,\ldots,P,I]_{m}$
		where $I=[P,G]$. Since $I$ is a $p$-group by Lemma \ref{p-comutator}, we have $I\subseteq J$ by Lemma \ref{lem:ideal}, and so
		$$[A,A,\ldots,A,G]_{m+1}=[P,P,\ldots,P,G]_{m+1}=[P,P,\ldots,P,I]_{m}\subseteq [P,P,\ldots,P,J]_m=0.$$ Hence, we get $n\leq m+1$, as required.
		
		Now suppose that $\pi=\{p_i\mid i=1,\ldots, k\}$ and $|\pi|=k>1$.
		Next, for each $i=1,\ldots, k$, we define
		\[
		I_i=[P_i,G],
		\]
		where $P_i\in \operatorname{Syl}_{p_i}(A)$ such that $\ell(P_i)$ is maximal among all $P_i\in\operatorname{Syl}_{p_i}(A)$.
		Then each $I_i$ is a $p_i$-subgroup of $G$, and $I_i$ is an ideal by Lemma \ref{lem:ideal}. Moreover, we have $\bigcap_{i=1}^{k} I_i=0$.
		
		Hence $X$ can be embedded into
		$
		\displaystyle\prod\limits_{i=1}^{k} X/I_i 
		$ by Lemma \ref{lem:direct-sum}. Notice that
		$
		P_i/I_i \leq \ker \overline{\lambda_i},
		$
		where $\overline{\lambda_i}$ is the corresponding lambda map of
		$X/I_i$ since $I_i=[P_i,G]=P_i*X$.
		
		Hence
		\[
		\bigl|\pi\bigl((X/I_i)/\ker\overline{\lambda_i}\bigr)\bigr|
		<
		|\pi|.
		\]
		Then by the inductive argument, we have
		\[
		\ell(X/I_i)\leq \ell(\overline{Q})+1,
		\]
		where $\overline{Q}\in \operatorname{Syl}_q(\overline{X})$ for some $q\neq p_i$.
		
		Since $\overline{Q}\cong Q$, it follows that $\ell(X/I_i)\leq \ell(Q)+1$. Then we can say that, for all $i$,
		\[
		\ell(X/I_i)
		\leq
		\max
		\Bigl\{
		\ell(P)\mid P\in \mathrm{Syl}_{p_i}(X),\; p_i\in \pi(X)
		\Bigr\}
		+1.
		\]
		
		Since $X$ is isomorphic to a subbrace of $\prod_{i=1}^{k} X/I_i$, we have
		\[ \ell(X)\leq \ell\left(\prod_{i=1}^{k} X/I_i\right)= \max_{1\leq i\leq k}\ell(X/I_i)\leq
		\max
		\Bigl\{
		\ell(P)\mid P\in \mathrm{Syl}_{p_i}(X),\; p_i\in \pi(X)
		\Bigr\}
		+1
		\]
		by using Lemma \ref{lem:nilpotency-products-subbraces}. This completes the proof.
	\end{proof}

We will next show that this bound is best possible. Note that, in contrast to the notation used in the remainder of the paper, the additive group is denoted by $(X, \cdot)$ and the multiplicative group by $(X, \circ)$ in the following example.

	\begin{example}
		\label{ex:best possible bound}
We now construct a family of examples of left nilpotent skew braces that show that the bound in Theorem \ref{thm left nil-lenght bound} is sharp. 
 Let $n\geq 3$ be odd and consider the dihedral group
$D_{2n}=\langle a,b\mid a^n=b^2=1,\ bab=a^{-1}\rangle$, with $A=\langle a\rangle\cong C_n$ and $B=\langle b\rangle\cong C_2$. 
Then $D_{2n}=BA$ is an exact factorization, i.e.\ $D_{2n}=BA$ and $A\cap B=1$, so every element of $D_{2n}$ is uniquely written of the form $ba^i$ with $b\in B$ and $i\in\mathbb{Z}_n$.

This factorization determines a skew brace $X$ on the underlying set of $D_{2n}$ (see also \cite[Example 8.17]{cedoven}). The additive group is $(X,\cdot)=D_{2n}$ itself,
 i.e.\ $\cdot$ is the dihedral group operation (written multiplicatively below). The circle operation is defined component-wise on the factors $B$ and $A$:
$$
(b_1a^i)\circ(b_2a^j)=b_1b_2\,a^{i+j}, \qquad b_1,b_2\in B,\ i,j\in\mathbb{Z}_n.
$$

Since $b_1b_2\in B$ and $a^{i+j}\in A$, one can check that $(X,\circ)$ is a group and that the compatibility condition holds, so $(X,\cdot, \circ)$ is a skew brace. In fact, we have 
 $(X,\circ)\cong B\times A$ via $ba^i\mapsto(b,a^i) $, which is isomorphic to a cyclic group of order $2n$.
Recall that the corresponding lambda map $\lambda_g(x)=g^{-1}(g\circ x)$. Write $g=b_1a^i$ and $x=b_2a^j$. 
Then $$\lambda_{g}(x)=\lambda_{b_1a^i}(b_2a^j)=a^{-i}b_1(b_1a^i\circ b_2a^j)=a^{-i}b_1b_1b_2a^{i+j}=a^{-i}b_2a^j a^i=a^{-i}xa^{i}$$

for all $x\in D_{2n}$. Note that $\lambda_{ba^i}$ is simply conjugation by $a^i$ and depends only on the $A$-component of $g$. In particular we observe that $B\leq\ker\lambda$.
Consequently $X^2=X*X=[A,D_{2n}]$, and since $b$ inverts the elements of the cyclic group $A$, $[A,D_{2n}]=[A,B]=A$, that is, $X^2=A\neq0$.
Moreover every element of $\lambda(X)$ acts by conjugation by an element of the abelian group $A$, so $X^3=X*X^2=[A,A]=0$. Thus $\ell(X)=2$.

Every Sylow subbrace of $X$ is trivial as a skew brace: for $p\mid n$, the Sylow $p$-subgroup $P\leq A$ satisfies $P^2=P*P=0$ as $(A,\cdot, \circ)$ is a trivial brace, while the Sylow $2$-subbrace $B$ satisfies $B^2=0$ because $B\leq\ker\lambda$. Hence $\ell(P)=1$ for every $P\in\operatorname{Syl}_{p_i}(X)$, $p_i\in\pi(X)$, so
\[
\ell(X)=2=\max\bigl\{\ell(P)\mid P\in\operatorname{Syl}_{p_i}(X),\;p_i\in\pi(X)\bigr\}+1,
\]
showing that the bound in Theorem \ref{thm left nil-lenght bound} is sharp.
\end{example}


\end{document}